\renewcommand{\R}{\mathbb{R}}
\renewcommand{\C}{\mathbb{C}}
\newcommand{\eps}{\varepsilon}
\newcommand{\m}{\mathrm{m}}
\newcommand{\ip}[2]{\langle #1,#2 \rangle}
\title
[Blaschke-Santal\'o inequality for unconditional log-concave measures]
{A Blaschke-Santal\'o inequality \\ for unconditional log-concave measures}
\author{\firstname{Emanuel} \lastname{Milman}}
\address{Department of Mathematics,
Technion - Israel Institute of Technology, Haifa 32000, Israel}
\email{emilman@tx.technion.ac.il}
\author{\firstname{Amir} \lastname{Yehudayoff}}
\address{Department of Computer Science, the University of Copenhagen, Universitetsparken 1,
2100 København, Denmark, and Department of Mathematics, Technion - Israel Institute of Technology, Haifa 32000, Israel}
\email{yehudayoff@technion.ac.il}
\dedicatory{Warmly dedicated to Patrick Cattiaux}
\thanks{The research leading to these results is part of a project that has received funding from the European Research Council (ERC) under the European Union’s Horizon 2020 research and innovation programme (grant agreement No.\ 101001677).
A.Y.\ is partially supported by a DNRF Chair grant.}
\keywords{Blaschke-Santal\'o inequality, unconditional, log-concave measures}
\subjclass{52A40}
\begin{document}

\begin{abstract}
The Blaschke-Santal\'o inequality states that the volume product
$|K| \cdot |K^\circ|$ of a symmetric convex body $K \subset \R^n$ is maximized by 
the standard Euclidean unit-ball.
Cordero-Erausquin asked whether the inequality remains true
for all even log-concave measures.
We briefly survey the literature around this question and provide details for the known fact that the inequality holds true for all 
unconditional log-concave measures. 
\end{abstract}

\begin{altabstract}
L'in\'egalit\'e de Blaschke-Santal\'o stipule que le produit volumique $|K| \cdot |K^\circ|$ d'un corps convexe sym\'etrique $K \subset \R^n$ est maximisé par la boule unité euclidienne standard.
Cordero-Erausquin s'est demandé si l'in\'egalit\'e restait vraie
pour toutes les mesures log-concaves paires.
Nous passons bri\`evement en revue la litt\'erature sur cette question et fournissons des précisions sur le fait que l'in\'egalit\'e est vraie pour toutes les mesures log-concaves inconditionnelles.
\end{altabstract}

\maketitle

\section{Introduction}

Let $K \subset \R^n$ be a symmetric convex body (i.e., compact with non-empty interior
so that $K=-K$). We denote the norm whose unit-ball is $K$ by $\|x\|_K := \inf \{ t > 0 : x \in t K \}$.
Let $K^\circ := \{y \in \R^n : \forall x \in K \;\; \ip{x}{y} \leq 1\}$ denote the polar body of~$K$
(i.e.~the unit-ball of the dual norm $\|\cdot\|_K^*$). The Euclidean norm is denoted by $\|\cdot\|_2$, and its unit-ball is denoted by
\[
B := \{x \in \R^n : \|x\|_2 \leq 1\} .
\]
The volume product (sometimes called the Mahler volume) of $K$ is defined to be
\[
P(K) := |K| \cdot |K^\circ|,
\]
where we use $| \cdot |$ to denote the standard $n$-dimensional Euclidean volume.
It is easy to check that the volume product is invariant under invertible linear transformations.
So, by standard compactness arguments it attains maximum and minimum values, and it is natural to study the corresponding extremal bodies. The minimizers are expected to be characterized by the long-standing Mahler conjecture~\cite{mahler1939ubertragungsprinzip}, which we will not discuss here. 
The maximizers are characterized by the celebrated Blaschke–Santal\'o inequality (which we only state in the symmetric case):

\begin{theorem*}[\cite{blaschke1945vorlesungen, santalo1949affine, saint1980volume}]
Let $K \subset \R^n$ be a symmetric convex body.
Then $P(K) \leq P(B)$, 
with equality iff $K$ is a centered ellipsoid. 
\end{theorem*}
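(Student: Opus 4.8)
The plan is to prove the inequality by Steiner symmetrization and then analyze equality separately. Write $S_\theta K$ for the Steiner symmetral of $K$ in direction $\theta \in S^{n-1}$: slicing $K$ by lines parallel to $\theta$, replace each chord by the centred chord of equal length lying on $\theta^\perp$. Steiner symmetrization preserves volume, so $|S_\theta K| = |K|$, and it preserves central symmetry. The whole theorem then reduces to the single monotonicity statement
\[
P(S_\theta K) \ge P(K) \qquad \text{for all } \theta \in S^{n-1}.
\]
Indeed, it is classical that for a suitable sequence of directions $(\theta_k)$ the iterates $S_{\theta_k}\cdots S_{\theta_1}K$ converge, in the Hausdorff metric, to the ball $rB$ of the same volume $|rB|=|K|$. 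Since on centrally symmetric bodies with $0$ in the interior the maps $K\mapsto K^o$ and $K\mapsto|K|$ are continuous for the Hausdorff metric, so is $P$, and combining with the monotonicity gives $P(K)\le \lim_k P(S_{\theta_k}\cdots S_{\theta_1}K)=P(rB)=P(B)$, the last equality being the scaling invariance of $P$.

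The core is therefore the monotonicity, which I would prove by a shadow-system argument. Let $[-g(x),f(x)]$ be the chord of $K$ over $x$ in the projection $D=P_{\theta^\perp}K$; here $f$ (the upper boundary) is concave and $g$ (the negated lower boundary) is concave, and central symmetry gives $g(x)=f(-x)$. For $t\in[0,1]$ let $K_t$ be the body whose chord over $x$ is
\[
\Big[-\big((1-t)g(x)+t f(x)\big),\ (1-t)f(x)+t g(x)\Big].
\]
Each $K_t$ is convex, since its upper boundary $(1-t)f+tg$ is concave and its lower boundary $-((1-t)g+tf)$ is convex; the chord lengths and the projection $D$ are independent of $t$, so $|K_t|=|K|$; and $K_0=K$, $K_1$ is the reflection of $K$ in $\theta^\perp$, while $K_{1/2}=S_\theta K$. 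Thus $(K_t)$ is a shadow system in direction $\theta$ with $0$ in the interior of each $K_t$. The key analytic input is that $t\mapsto |K_t^o|^{-1}$ is convex on $[0,1]$ for any such shadow system (Meyer--Reisner; cf.\ Campi--Gronchi). Granting this, and noting $|K_1^o|=|K_0^o|=|K^o|$ because the reflection is orthogonal, convexity at the midpoint yields
\[
|K_{1/2}^o|^{-1}\le \frac{1}{2}\big(|K_0^o|^{-1}+|K_1^o|^{-1}\big)=|K^o|^{-1},
\]
so $|(S_\theta K)^o|\ge |K^o|$; multiplying by $|S_\theta K|=|K|$ proves the monotonicity.

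The step I expect to be the main obstacle is exactly this convexity of $t\mapsto |K_t^o|^{-1}$. One should first appreciate why a soft argument fails: the naive hope $S_\theta(K^o)\subseteq (S_\theta K)^o$ (which would give $|(S_\theta K)^o|\ge |S_\theta(K^o)|=|K^o|$ at once) is \emph{false} in general, since comparing the half-lengths of the $\theta$-chords of the two bodies pointwise over $D$ reduces to $(f(x)-f(-x))^2\le 0$. Hence the averaging over the base is essential. I would establish the convexity by writing $|K_t^o|$ as an integral of the radial function of $K_t^o$ and exploiting that the support function of $K_t$ is affine in $t$ along $\theta$-fibres; differentiating twice in $t$, or applying a Prékopa--Leindler/Brunn--Minkowski estimate fibrewise, should give the second-order inequality. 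This is the most delicate part and also the natural place to record the equality conditions.

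Finally, the equality case. If $P(K)=P(B)$ then, because $P$ is non-decreasing along the symmetrization sequence and converges to $P(B)$, we must have $P(S_\theta K)=P(K)$ at every stage, i.e.\ equality in the shadow-system convexity, forcing $t\mapsto |K_t^o|^{-1}$ to be affine for each such direction. The equality characterization of that convexity should force $K$ to be already symmetric under reflection in $\theta^\perp$ at each stage; invariance under a rich enough family of reflections then forces $K$ to be a ball, and undoing the linear maps implicit in the affine invariance of $P$ identifies the extremizers as the centred ellipsoids. Making precise both the strict-convexity characterization and the passage from reflection symmetry to the ball is the subtle part of this last step.
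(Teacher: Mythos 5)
The paper does not actually prove this classical statement---it is quoted with references---but the symmetrization machinery of Section~2 specializes, when $\mu$ is the Lebesgue measure, to the Meyer--Pajor proof of the inequality: $|S_uK|=|K|$ is automatic, and $|(S_u K)^o|\geq |K^o|$ follows from the slice inclusion $(S_uK)^o(z)\supseteq \frac{1}{2}(K^o(z)+K^o(-z))$ together with Brunn--Minkowski applied to the $(n-1)$-dimensional fibres (this is exactly Claim~\ref{clm:1S} with $\mu=\m$). Your route to the same monotonicity $P(S_\theta K)\geq P(K)$ is genuinely different and heavier: you embed $K$ and $S_\theta K$ in a shadow system $(K_t)_{t\in[0,1]}$ and invoke the Meyer--Reisner/Campi--Gronchi theorem that $t\mapsto |K_t^o|^{-1}$ is convex. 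Your shadow system is set up correctly (each chord translates at speed $g(x)-f(x)$, $K_{1/2}=S_\theta K$, $K_1$ is the reflection of $K$ in $\theta^\perp$), the reduction to monotonicity via Hausdorff convergence of iterated symmetrals to a ball is standard and sound, and your observation that the naive inclusion $S_\theta(K^o)\subseteq (S_\theta K)^o$ fails---so that averaging over the base is unavoidable---is well taken. The trade-off is that the convexity of $|K_t^o|^{-1}$ along shadow systems is a substantial theorem in its own right, considerably harder than the slice inclusion, and your in-line suggestion for proving it (``differentiating twice in $t$ \ldots should give the second-order inequality'') is not a proof. If you import that convexity as a black box, the inequality part of the theorem is complete; the more economical path is the Meyer--Pajor one.

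The genuine gap is the equality case, which is part of the statement. Equality $P(K)=P(B)$ does force $P(S_\theta K)=P(K)$ for every $\theta$ and hence equality in your midpoint-convexity step, but the equality condition in the Meyer--Reisner/Campi--Gronchi convexity does \emph{not} say that $K_1=K_0$; it says that the bodies $K_t$ are affine (shear) images of one another, which for your particular shadow system is no information at all, since $K_1$ is by construction a reflection of $K_0$. So the step ``equality forces $K$ to be symmetric under reflection in $\theta^\perp$'' does not follow as written, and the subsequent passage to ellipsoids collapses. The known arguments (Saint-Raymond, Meyer--Pajor) instead analyze equality in the fibrewise Brunn--Minkowski step: for symmetric $K$ one has $\frac{1}{2}(K^o(z)+K^o(-z))=\frac{1}{2}(K^o(z)+(-K^o(z)))$, and equality forces every slice $K^o(z)$ to be centrally symmetric and the slice inclusion to be an equality for a.e.\ $z$, from which the ellipsoid is extracted by a careful (and genuinely delicate) argument. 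As it stands, your proposal proves the inequality modulo a cited theorem but does not establish the characterization of equality.
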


The inequality was proved by Blaschke \cite{blaschke1945vorlesungen} for $n \leq 3$ 
and Santal\'o \cite{santalo1949affine} for general $n$. The
characterization of ellipsoids as the equality cases was established by Blaschke and
Santal\'o under certain regularity assumptions on $K$, which
were removed by Saint-Raymond \cite{saint1980volume} (see also Meyer and Pajor \cite{meyer1990blaschke}).
Functional versions of this inequality were proved by
Ball~\cite{ball1986isometric}, by Artstein--Avidan, Klartag and Milman \cite{artstein2004santalo},
by Fradelizi and Meyer~\cite{fradelizi2007some}, by Klartag~\cite{KlartagMarginalsOfInequalities}, 
by Lehec~\cite{lehec2009partitions}, and others. We refer to the comprehensive survey on volume products by Fradelizi, Meyer and Zvavitch \cite{FMZ-VolumeProduct} for additional information.

\medskip

A question put forth and studied by Dario Cordero-Erausquin~\cite{cordero2002santalo} pertains to a conjectural strengthening of the Blaschke--Santal\'o inequality. 
For a symmetric convex body $L \subset \R^n$, 
the volume product over $L$ is defined to be
$$P_L(K) := |K \cap L| \cdot |K^\circ \cap L|.$$
Is it true that for every symmetric convex body $K \subset \R^n$,
\begin{equation}
\label{eqn:genS}
P_L(K) \leq P_L(B)?
\end{equation}
Cordero-Erausquin showed in \cite{cordero2002santalo} that the answer is positive
when $K$ and $L$ are \emph{circled} convex bodies in $\C^n$, namely the unit-balls of norms over $\C$, and in addition $L$ is invariant under complex conjugation. 

Note that the functional $P_L(\cdot)$ is no longer invariant under linear transformations.
In particular, dilations may change the value of $P_L(\cdot)$.
Nevertheless, the radius of the maximizing ball $B$ is conjectured to be $1$ so that $B^\circ = B$.
In addition, the maximizers in~\eqref{eqn:genS} are generally not unique;
for example, it always holds that $P_L(B) \leq P_L( B \cap L)$.

\medskip

It is natural to extend~\eqref{eqn:genS} to log-concave measures. An absolutely continuous Borel measure $\mu = f_\mu(x) dx$ on $\R^n$ is called log-concave if $\log f_\mu : \R^n \rightarrow \R \cup \{ -\infty \}$ is a concave function. By the Pr\'ekopa--Leindler theorem and Borell's characterization \cite{borell1975convex}, this is equivalent to the property that $\mu( (1-\lambda) A_0+ \lambda A_1) \geq \mu(A_0)^{1-\lambda} \mu(A_1)^{\lambda}$ for all compact $A_0,A_1 \subset \R^n$ and $0 < \lambda <1$. For example, the uniform measure on a convex body is log-concave. 
The measure $\mu$ is called even if its density $f_\mu$ is even, or equivalently, if  $\mu(A) = \mu(-A)$ for every Borel set $A \subset \R^n$.
The volume product over $\mu$ is defined to be
\[
P_\mu(K) := \mu(K ) \cdot \mu(K^\circ) .
\]

\begin{question*}[Cordero-Erausquin \cite{cordero2002santalo}]
Is it true that for every even log-concave measure $\mu$
and for every symmetric convex body $K$ in $\R^n$,
\begin{equation}
\label{eqn:genSmu}
P_\mu(K) \leq P_\mu(B)?
\end{equation}
\end{question*}

\medskip

It was shown by Klartag in \cite{KlartagMarginalsOfInequalities} that (\ref{eqn:genSmu}) holds for even measures $\mu$ (not necessarily log-concave!) having density of the form $f_\mu(x) = \int_0^\infty t^{n+1} e^{-\alpha t^2} e^{-\Psi(t x)} dt$ where $\Psi : \R^n \rightarrow (-\infty,\infty]$ is an even convex function and $\alpha > 0$. 

\smallskip

Question (\ref{eqn:genSmu}) is related to two other important conjectures pertaining to even log-concave measures $\mu$ and symmetric convex bodies $K$. The first is the generalized~(B) conjecture, stating that $\R \ni t \mapsto \log \mu(e^t K)$ is concave; a stronger version states the same after replacing $e^t$ by $e^{tD}$ where $D$ is any diagonal matrix. When $\mu$ is the Gaussian measure, the strong version was confirmed by Cordero-Erausquin, Fradelizi and Maurey \cite{cordero2004b}, following a question of Banaszczyk \cite{latala2003some}. In \cite{CorderoRotem-BConjForRotations}, using some sharp weighted Poincar\'e inequalities for even probability measures which are log-concave with respect to a rotationally invariant measure, Cordero-Erausquin and Rotem confirmed the strong generalized (B) conjecture for a large class of rotationally-invariant measures $\mu$, and in particular for the uniform measure on $B$.
Reversing the roles of body and measure, they deduced that $\R \ni t \mapsto \mu(e^t B)$ is log-concave for any even log-concave measure $\mu$ (and by applying a linear transformation, the same holds with $B$ replaced by general centered ellipsoids). As $(e^t B)^{\circ} = e^{-t} B$, this immediately yields a positive answer to (\ref{eqn:genSmu}) for $K$'s which are centered Euclidean balls (of arbitrary radius). 
Note that Cordero-Erausquin and Rotem have shown in \cite{CorderoRotem-StrongBFalse} that the strong generalized (B) conjecture is false for general even log-concave $\mu$'s, even for certain non-isotropic Gaussian measures (having non-identity covariance), but the non-strong version remains plausible and widely believed in full generality. 

\smallskip

A more distantly related conjecture is the log-Brunn-Minkowksi conjecture of
B\"{o}r\"{o}czky, Lutwak, Yang and Zhang \cite{boroczky2012log}, involving two symmetric convex bodies $K,L \subset \R^n$; 
as the precise formulation is not essential for the rest of this note, we only refer to ~\cite{boroczky2012log,EMilman-IsomorphicLogMinkowski} for further context and explicit description. 
This conjecture has been established in the plane in \cite{boroczky2012log}, but the general case remains open for $n \geq 3$. 
Some partial results confirm the conjecture for zonoids \cite{VanHandel-LogMinkowskiForZonoids}, under various symmetry assumptions \cite{BoroczkyKalantz-LogBMWithSymmetries,Rotem-logBM,saroglou2015remarks}, for perturbations of the Euclidean ball \cite{ChenEtAl-LocalToGlobalForLogBM} (following \cite{ColesantiLivshyts-LocalpBMUniquenessForBall,CLM-LogBMForBall,KolesnikovEMilman-LocalLpBM}), locally for perturbations of the unit-balls of $\ell_p^n$ \cite{KolesnikovEMilman-LocalLpBM}, and under curvature pinching estimates \cite{EMilman-IsomorphicLogMinkowski, IvakiEMilman-LpMinkowskiUnderCurvaturePinching}. 
The relevance to this note was expounded in the work of Saroglou, who showed in \cite{saroglou2015remarks,Saroglou-logBM2} that the log-Brunn-Minkowski conjecture implies the generalized (B) conjecture, and moreover, that the validity of the log-Brunn-Minkowski conjecture in $\R^n$ for all $n$ is equivalent to the strong generalized (B) conjecture when $\mu$ is the uniform measure on the cube in $\R^n$ for all $n$. 

\smallskip
This makes a link to the topic of this note, since cubes are the quintessential \emph{unconditional} convex bodies. 
Fixing an orthonormal basis in $\R^n$ and corresponding coordinates, a body $K$ is called unconditional if for every $x \in K$,
we have $(\eps_1 x_1,\ldots,\eps_n x_n) \in K$ for all $\eps \in \{\pm 1\}^n$.
This leads to wondering if~\eqref{eqn:genSmu} may be shown to hold, if not for general even log-concave measures $\mu$, at least for unconditional ones, whose density $f_\mu$ is invariant under the aforementioned action of multiplication by $\{\pm 1\}^n$. 

\smallskip

Cordero-Erausquin, Fradelizi and Maurey showed in \cite{cordero2004b} that the strong generalized (B) conjecture holds when \emph{both} $\mu$ and $K$ are unconditional, and Saroglou deduced from this the validity of the log-Brunn-Minkowski conjecture when both bodies $K,L$ are unconditional \cite{saroglou2015remarks}. For unconditional convex bodies $K,L \subset \R^n$,
define the (unconditional and convex) body
$$K^{\frac{1}{2}} L^{\frac{1}{2}}
= \big\{ z \in \R^n : \exists x \in K , y \in L \ \ \forall i \in [n] \
|z_i| = |x_i|^{\frac{1}{2}} |y_i|^{\frac{1}{2}} \big\}.$$

\begin{prop*}[Proposition 8 or 10 in \cite{cordero2004b}]
Let $\mu$ be an unconditional log-concave measure on $\R^n$,
and let $K,L \subset \R^n$ be unconditional convex bodies.
Then,
$$\mu(K) \mu(L) \leq \mu^2(K^{\frac{1}{2}} L^{\frac{1}{2}}) .$$
\end{prop*}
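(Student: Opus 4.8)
The plan is to linearize the unconditional structure by a coordinatewise logarithmic change of variables and then invoke the Pr\'ekopa--Leindler inequality. Since $\mu$, $K$, $L$ and $K^{\frac12}L^{\frac12}$ are all unconditional, each of the three volumes factors through the positive orthant: $\mu(K) = 2^n \int_{K \cap \R^n_+} f_\mu(x)\,dx$, and likewise for $L$ and for $K^{\frac12}L^{\frac12}$. On the positive orthant I would substitute $x_i = e^{u_i}$, whose Jacobian is $e^{u_1 + \cdots + u_n}$, and introduce
\[
g(u) := f_\mu(e^{u_1},\dots,e^{u_n})\, e^{u_1 + \cdots + u_n}, \qquad A := \{u \in \R^n : (e^{u_1},\dots,e^{u_n}) \in K\},
\]
with $B$ defined analogously from $L$, so that $\mu(K) = 2^n \int_A g\,du$ and $\mu(L) = 2^n \int_B g\,du$.

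Two structural facts drive the argument. First, the sets $A$ and $B$ are convex, and in these coordinates $K^{\frac12}L^{\frac12}$ corresponds exactly to the Minkowski midpoint $\frac12(A+B)$. The latter is immediate from $\log z_i = \frac12 \log x_i + \frac12 \log y_i$. Convexity of $A$ rests on the monotonicity of unconditional convex bodies: containing a point $z$ forces containment of the whole box $\prod_i [-z_i, z_i]$ (its vertices are the sign flips of $z$), so by the coordinatewise AM--GM bound $x_i^{1-\lambda}(x_i')^\lambda \le (1-\lambda)x_i + \lambda x_i'$ the geometric interpolant of two points of $K \cap \R^n_+$ again lies in $K$. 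Second, $g$ is log-concave on $\R^n$. The Jacobian factor is log-affine, so it suffices that $u \mapsto \log f_\mu(e^{u_1},\dots,e^{u_n})$ be concave. Here I would use that the unconditional concave function $\log f_\mu$ is coordinatewise non-increasing on $\R^n_+$ (an even concave function of one variable is non-increasing on $[0,\infty)$), and combine this with the same AM--GM inequality $e^{(1-\lambda)u + \lambda u'} \le (1-\lambda)e^u + \lambda e^{u'}$ followed by the concavity of $\log f_\mu$.

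With these in hand the proof concludes by applying Pr\'ekopa--Leindler at $\lambda = \frac12$ to the triple $F = g\,\mathbf{1}_A$, $G = g\,\mathbf{1}_B$, $H = g\,\mathbf{1}_{(A+B)/2}$. The required pointwise hypothesis $H(\frac{x+y}{2}) \ge F(x)^{1/2}G(y)^{1/2}$ for $x \in A$, $y \in B$ is exactly the log-concavity of $g$, since $\frac{x+y}{2} \in \frac12(A+B)$. Pr\'ekopa--Leindler then gives $\int_{(A+B)/2} g \ge (\int_A g)^{1/2}(\int_B g)^{1/2}$; squaring and restoring the factors $2^n$ produces $\mu(K)\mu(L) \le \mu^2(K^{\frac12}L^{\frac12})$. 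I expect the only genuine obstacle to be verifying the log-concavity of $g$ --- equivalently, that both the bodies and the density are well behaved under the log substitution --- and this is the sole place where unconditionality enters, through the monotonicity it forces together with AM--GM.
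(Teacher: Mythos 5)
Your proof is correct. Note that the paper does not prove this proposition at all --- it is quoted as a black box from Cordero-Erausquin, Fradelizi and Maurey \cite{cordero2004b} --- and your argument (coordinatewise logarithmic change of variables, convexity of the log-images of unconditional bodies via the box-monotonicity and AM--GM, log-concavity of the transformed density from coordinatewise monotonicity of $\log f_\mu$ on $\R^n_+$, and Pr\'ekopa--Leindler at $\lambda=\tfrac12$) is precisely the standard proof given in that reference.
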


This proposition immediately leads to the following known corollary (see e.g.~\cite[Corollary 2]{fradelizi2007some}),
which answers~\eqref{eqn:genSmu} in the case that both $\mu$ and $K$
are unconditional. In a sense, this is the analogue over $\R$ of a result of  Cordero-Erausquin \cite{cordero2002santalo}, who showed that the answer to (\ref{eqn:genSmu}) is positive when $\mu$ is a log-plurisubharmonic measure on $\C^n$ invariant under complex conjugation, and both $\mu$ and $K$ are circled
(i.e.~invariant under the action of mutliplication by $e^{i \theta}$).

\begin{corollary}
\label{cor:BSu.c.}
Let $\mu$ be an unconditional log-concave measure on $\R^n$,
and let $K \subset \R^n$ be an unconditional convex body.
Then $P_\mu(K) \leq P_\mu(B)$. 
\end{corollary}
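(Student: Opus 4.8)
The plan is to apply the Proposition above with the choice $L = K^o$, and then reduce the entire statement to a single elementary geometric containment. Since $K$ is unconditional, its polar $K^o$ is unconditional as well, so the Proposition applies to the pair $(K,K^o)$ and gives
\[
\mu(K)\,\mu(K^o) \leq \mu^2\big(K^{\frac{1}{2}}(K^o)^{\frac{1}{2}}\big).
\]
Because $B$ is self-polar, $B^o = B$, we have $P_\mu(B) = \mu(B)^2$. Hence, once we know the containment $K^{\frac{1}{2}}(K^o)^{\frac{1}{2}} \subseteq B$, monotonicity of the measure $\mu$ finishes everything: $\mu(K^{\frac{1}{2}}(K^o)^{\frac{1}{2}}) \leq \mu(B)$, and combining the two displays yields the claim.

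The heart of the argument is therefore the claim that $K^{\frac{1}{2}}(K^o)^{\frac{1}{2}} \subseteq B$. First I would fix an arbitrary $z \in K^{\frac{1}{2}}(K^o)^{\frac{1}{2}}$, so that by definition there are $x \in K$ and $y \in K^o$ with $|z_i| = |x_i|^{1/2}|y_i|^{1/2}$ for every $i \in [n]$. Then $\|z\|_2^2 = \sum_i z_i^2 = \sum_i |x_i|\,|y_i|$, and I aim to bound this sum by $1$. This is the only step that uses the unconditionality of $K$: choosing signs $\eps_i = \mathrm{sign}(x_i y_i)$, the point $\tilde{x} := (\eps_1 x_1,\ldots,\eps_n x_n)$ still lies in $K$, while $\ip{\tilde{x}}{y} = \sum_i \eps_i x_i y_i = \sum_i |x_i|\,|y_i|$. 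Since $y \in K^o$, the defining inequality of the polar body gives $\ip{\tilde{x}}{y} \leq 1$, so $\|z\|_2 \leq 1$ and $z \in B$, as desired.

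Assembling the pieces, we obtain
\[
P_\mu(K) = \mu(K)\,\mu(K^o) \leq \mu^2\big(K^{\frac{1}{2}}(K^o)^{\frac{1}{2}}\big) \leq \mu(B)^2 = P_\mu(B).
\]
I do not anticipate a serious obstacle here, since the substantive content is carried entirely by the Proposition; the remaining work is the containment above. The only points meriting a word of care are first that $K^o$ inherits unconditionality from $K$, so that the Proposition is genuinely applicable with $L = K^o$, and second that the sign-selection argument remains valid at coordinates where $x_i$ or $y_i$ vanishes (there the contribution $|x_i|\,|y_i|$ is zero and the choice of $\eps_i$ is immaterial).
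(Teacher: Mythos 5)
Your proposal is correct and follows essentially the same route as the paper: apply the Proposition with $L = K^o$ and reduce to the containment $K^{\frac{1}{2}}(K^o)^{\frac{1}{2}} \subseteq B$, which both arguments establish by using unconditionality to convert $\sum_i |x_i|\,|y_i|$ into a pairing of a point of $K$ with a point of $K^o$ (the paper writes this as $\sum_i |x_i|\,|y_i| \leq \|x\|_K\|y\|_{K^o} \leq 1$, which is your sign-selection step in disguise). No gaps.
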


\begin{proof}
Let $L = K^\circ$ be the polar of $K$. Because $K$ is unconditional, so is $L$.
It follows that $K^{\frac{1}{2}} L^{\frac{1}{2}} \subseteq B$, because for all $x \in K$ and $y \in L$, if $|z_i| = |x_i|^{\frac{1}{2}} |y_i|^{\frac{1}{2}}$ then
\[
\sum_{i=1}^n z_i^2 = \sum_{i=1}^n |x_i| |y_i| \leq \|x\|_K \|y\|_L \leq 1 .
\]
Consequently,
\begin{equation*}
P_\mu(K)
= \mu(K) \mu(L) \leq \mu^2(K^{\frac{1}{2}} L^{\frac{1}{2}})
\leq \mu^2(B) = P_\mu(B). \qedhere
\end{equation*}
\end{proof}

\bigskip

The main impetus for writing this note is the less obvious consequence that~\eqref{eqn:genSmu} holds true when $\mu$ is assumed to be unconditional, but without imposing any further requirements on~$K$.  

\begin{theorem}
\label{thm:main}
Let $\mu$ be an unconditional log-concave measure on $\R^n$,
and let $K \subset \R^n$ be a symmetric convex body.
Then $P_\mu(K) \leq P_\mu(B)$. 
\end{theorem}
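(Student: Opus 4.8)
The plan is to prove Theorem~\ref{thm:main} by reducing the case of a general symmetric body $K$ to the unconditional case already handled in Corollary~\ref{cor:BSu.c.}. Since the measure $\mu$ is unconditional, I expect that the symmetrization of $K$ which genuinely helps is one that produces an unconditional body while controlling both $\mu(K)$ and $\mu(K^o)$ simultaneously. The natural candidate is a Steiner-type or Schwarz symmetrization performed one coordinate at a time; the key point will be to show that symmetrizing $K$ with respect to a coordinate hyperplane can only \emph{increase} the product $P_\mu(K) = \mu(K)\mu(K^o)$, so that after symmetrizing in all $n$ coordinate directions we arrive at an unconditional body $\tilde K$ with $P_\mu(K) \le P_\mu(\tilde K) \le P_\mu(B)$, the last inequality being exactly Corollary~\ref{cor:BSu.c.}.

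First I would set up the symmetrization step carefully. Fix a coordinate, say the $n$-th, write $x = (x',x_n)$, and consider the operation that replaces $K$ by a body $S K$ symmetric under $x_n \mapsto -x_n$ while preserving each one-dimensional slice's $\mu$-measure along the $x_n$-axis (a measure-weighted Steiner symmetrization). Because $\mu$ is unconditional, its density factors appropriately under reflection in each coordinate, so the relevant one-dimensional log-concave marginals are even, and the symmetrization preserves $\mu(K)$ by construction. The delicate part is the polar body: I would need that this symmetrization interacts well with polarity, namely that $\mu\big((S K)^o\big) \ge \mu(K^o)$, or that an appropriately matched symmetrization applied to $K^o$ does not decrease its measure. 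This is where I expect the main obstacle to lie, since symmetrization and polarity do not commute in a naive way, and controlling $(SK)^o$ requires relating it to a symmetrization of $K^o$.

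To handle the polar carefully, I would likely follow the classical Blaschke--Santal\'o strategy adapted to the measure $\mu$: use a Meyer--Pajor / Saint-Raymond style argument in which one performs the symmetrization of $K$ and simultaneously tracks a \emph{shift} so that the symmetrized polar contains a corresponding symmetrization of the original polar. Concretely, for the slice of $K$ over a fixed $x' $, one replaces the interval $[a(x'),b(x')]$ in the $x_n$-direction by the centered interval of the same $\mu$-weighted length, and one checks via a one-dimensional convexity/rearrangement inequality (using the evenness of the relevant log-concave marginal of $\mu$) that the measure of the polar does not decrease. The engine here should be a one-dimensional inequality of the form: among all translates of an interval, the centered one maximizes the measure of the dual interval under an even log-concave weight. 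I would isolate and prove this one-dimensional statement first, since it is the crux.

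Once the single-coordinate symmetrization is shown to satisfy $\mu(SK) = \mu(K)$ and $\mu\big((SK)^o\big) \ge \mu(K^o)$, the argument concludes by iteration: apply $S$ successively in each of the $n$ coordinate directions. Each step preserves symmetry in all previously-symmetrized directions (this commutation of coordinate symmetrizations must be verified, and is standard for Steiner symmetrization), so after $n$ steps one obtains an unconditional symmetric convex body $\tilde K$ with $P_\mu(K) \le P_\mu(\tilde K)$. Applying Corollary~\ref{cor:BSu.c.} to the unconditional body $\tilde K$ then yields $P_\mu(\tilde K) \le P_\mu(B)$, completing the proof. The two places demanding the most care are the one-dimensional centering inequality for the polar slice and the verification that the symmetrizations in distinct coordinates can be composed without destroying the symmetries already gained; I expect the former to be the genuine mathematical obstacle and the latter to be a routine but necessary bookkeeping check.
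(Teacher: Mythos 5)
Your overall architecture is the same as the paper's: symmetrize $K$ one coordinate at a time, show each step does not decrease $P_\mu$, and finish with Corollary~\ref{cor:BSu.c.}. However, the specific symmetrization you commit to --- the \emph{measure-weighted} one, which replaces each fiber of $K$ by the centered interval of equal $\mu_y$-measure --- creates two genuine obstacles that the paper's choice avoids. First, fiber-wise measure-preserving symmetrization with respect to an even log-concave weight need not preserve convexity (already in the plane, symmetrizing a slanted strip with respect to the weight $e^{-|t|}$ produces a region whose boundary function is strictly convex rather than concave), so your $\tilde K$ need not be a convex body, Corollary~\ref{cor:BSu.c.} would not apply to it, and the commutation bookkeeping for composing the $n$ coordinate steps breaks down. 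Second, and more seriously, the ``one-dimensional centering inequality'' you identify as the crux is not the right statement: the polar is not controlled fiber-by-fiber in the direction $u$ via ``dual intervals'' (there is no such decoupling), but rather via its $(n-1)$-dimensional slices $K^o(z) = (K^o - z) \cap u^\perp$ at heights $z \in \R u$, and no analogue of the Meyer--Pajor inclusion is available for the measure-weighted symmetral.

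The repair is to drop the requirement $\mu(SK) = \mu(K)$ and use the plain Steiner symmetral $(S_u K)_y = y + \frac{K_y + (-K_y)}{2}$, for which both halves become soft. For the body: each fiber measure $\mu_y$ is an even log-concave measure on the line, so $\mu_y\bigl(\frac{K_y+(-K_y)}{2}\bigr) \ge \mu_y(K_y)^{1/2}\mu_y(-K_y)^{1/2} = \mu_y(K_y)$, and Fubini gives $\mu(S_u K) \ge \mu(K)$ --- an inequality rather than the equality you aimed for, but that is all the product needs. For the polar: the Meyer--Pajor lemma \cite{meyer1990blaschke} gives the slice inclusion $(S_u K)^o(z) \supseteq \frac{K^o(z)+K^o(-z)}{2}$, and since $K$ is symmetric one has $K^o(-z) = -K^o(z)$, so no shift-tracking is needed; the same log-concavity argument applied to the even $(n-1)$-dimensional slice measures $\mu^z$ yields $\mu((S_u K)^o) \ge \mu(K^o)$. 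With this substitution your plan closes exactly as in the paper.
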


In particular, this applies to any \emph{rotationally-invariant} log-concave measure $\mu$. In fact, Theorem \ref{thm:main} is known to hold for a more general class of rotationally-invariant measures $\mu = \exp(-\varphi(|x|)) dx$, with the property that $\R \ni t \mapsto \varphi(e^t)$ is convex and non-decreasing (which is weaker than log-concavity). Indeed, by \cite[Corollary 5.4]{CFPP-EasyBusemannAndCampiGronchi}, whenever $\varphi(t)$ is non-decreasing then $\mu(K^\circ) \leq \mu(B_K^\circ)$, where $B_K$ is the centered Euclidean ball so that $|B_K| = |K|$. Since clearly $\mu(K) \leq \mu(B_K)$ by the ``bathtub principle", it follows that $P_\mu(K) \leq P_\mu(B_K)$. It remains to note that $P_\mu(r B) \leq P_\mu(B)$ for all $r > 0$ whenever $\varphi(e^t)$ is convex, because in that case $\R \ni t \mapsto \mu(e^t B)$ is known to be log-concave (e.g.~by \cite{cordero2004b} or \cite{CorderoRotem-BConjForRotations}). 

\subsection{Afterthoughts}
After finishing writing this note, carefully inspecting the literature, and receiving a referee report, we realized three things which we were previously unaware of: Firstly, in the case when $\mu$ is the uniform measure on a convex unconditional body $T$, Theorem \ref{thm:main} was already known to hold 
according to Klartag \cite[p.~135, ll.~6--8]{KlartagMarginalsOfInequalities} (who also accredits this independently to Barthe and Cordero-Erausquin in a private communication).
Secondly, Theorem \ref{thm:main} was already explicitly stated and proved in an identical manner in a survey by Fradelizi, Meyer and Zvavitch \cite[Theorem 7]{FMZ-VolumeProduct}. 
Thirdly, essentially the same symmetrization argument 
appeared in a recent preprint by Colesanti, Livshyts, Kolesnikov and Rotem (see \cite[Section 4 and Theorem 5.7]{CLKR-WeightedFunctionalBS}). They studied a functional version of the Blaschke--Santal\'o inequality, using the volume product
\[
P_{\mu}(\Phi) = \int \exp(-\Phi) d\mu \int \exp(-\Phi^*) d\mu ,
\]
where $\Phi^*(y) := \sup_{x \in \R^n} \langle x,y \rangle - \Phi(x)$ is the Legendre conjugate of the convex function $\Phi$ (in fact, they considered a more general version, where one allows general $L^{p_i}(\mu_i)$ norms with respect to two different measures $\mu_1,\mu_2$ and exponents $p_1,p_2$ on the right). Note that when $\Phi_{K,p}(x) = \frac{1}{p} \| x \|_K^p$ and $p \in [1,\infty]$ then $\Phi_{K,p}^*(x) = \frac{1}{q} \|x\|_{K^{o}}^q$ with $\frac{1}{p} + \frac{1}{q} = 1$. The functional analogue of (\ref{eqn:genSmu}) was resolved by Klartag in \cite[Theorem 4.2]{KlartagMarginalsOfInequalities} (and independently by Barthe and Cordero-Erausquin), who showed that:
\[
P_\mu(\Phi) \leq P_\mu(\Phi_{B,2}) , 
\]
where $\Phi_{B,2}(x)$ is the self-dual function $\frac{1}{2} \|x\|_2^2$; when $\mu$ is the Lebesgue measure $\m$, this was first established by Ball in \cite{ball1986isometric} (see also \cite{artstein2004santalo}). However, contrary to the case when $\mu=\m$, $P_{\mu}(\Phi_{K,p})$ does not coincide in general (up to constants) with $P_{\mu}(K)$  for any value of $p$, and so we do not see how to obtain Theorem \ref{thm:main} by utilizing $P_{\mu}(\Phi)$. 

For the sake of completeness and in the hope that this would be a service to the community, we decided to keep our note in the form of a short survey,
highlighting the simple idea underlying the proof in the most elementary case. 

\medskip
\noindent \textbf{Acknowledgments.} We thank Alexandros Eskenazis, Matthieu Fradelizi, Bo'az Klartag, Dylan Langharst, Galyna Livshyts and the anonymous referee for their encouragement and for pointing out missing references.


\section{Reducing to the unconditional case}

The following lemma reduces the case of symmetric convex bodies
to the case of unconditional ones.

\begin{lemma}
\label{lem:tou.c}
Let $\mu$ be an unconditional log-concave measure on $\R^n$,
and let $K \subset \R^n$ be a symmetric convex body.
Then, there is an unconditional convex body $L \subset \R^n$ so that
$P_\mu(K) \leq P_\mu(L).$
\end{lemma}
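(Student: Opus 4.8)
The plan is to reach an unconditional body from $K$ by symmetrizing one coordinate at a time, so that at each step the body gains invariance under one more coordinate reflection while the volume product $P_\mu$ does not decrease. Write $\sigma_i$ for the reflection $(x_1,\ldots,x_i,\ldots,x_n)\mapsto(x_1,\ldots,-x_i,\ldots,x_n)$, and let $S_i$ denote the Steiner symmetrization with respect to the hyperplane $e_i^{\perp}$: for each line $\ell$ parallel to $e_i$, replace $K\cap\ell$ by the segment of the same length centered on $e_i^{\perp}$. Since $S_i$ preserves convexity and central symmetry, and since it commutes with every reflection $\sigma_j$ under which the current body is already invariant (here one uses that $\mu$, being unconditional, is itself $\sigma_j$-invariant, so the relevant fibers match up), the body $L:=S_n S_{n-1}\cdots S_1 K$ is convex, symmetric, and invariant under all the $\sigma_i$, i.e.\ unconditional. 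It thus suffices to show that a single step $K\mapsto S_iK$ does not decrease $P_\mu$; iterating $n$ times then yields the desired unconditional $L$ with $P_\mu(K)\le P_\mu(L)$ (after which Corollary~\ref{cor:BSu.c.} finishes Theorem~\ref{thm:main}). So I fix $i$ and aim to prove $P_\mu(S_iK)\ge P_\mu(K)$.

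\textbf{The primal factor is easy.} Write $x=(\bar x,t)$ with $t=x_i$. Because $\mu$ is unconditional, its density $f_\mu(\bar x,\cdot)$ is an even, log-concave function of $t$, hence symmetric and nonincreasing in $|t|$. For fixed $\bar x$ the fiber $K\cap(\{\bar x\}\times\R)$ is an interval, and $S_i$ replaces it by the centered interval of the same length; among all intervals of a given length, the centered one carries the largest $f_\mu(\bar x,\cdot)$-mass. Integrating this one-dimensional comparison over $\bar x$ by Fubini gives $\mu(S_iK)\ge\mu(K)$.

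\textbf{The polar factor is the crux.} It remains to establish the weighted analogue of the Meyer--Pajor inequality,
\[
\mu\big((S_iK)^o\big)\ \ge\ \mu(K^o).
\]
Caution is needed here: unlike the primal factor, this is \emph{not} a pointwise statement on fibers. A direct computation gives $(S_iK)^o=\{(\bar y,s):\ |\ip{\bar x}{\bar y}|+w(\bar x)|s|\le 1\ \ \forall \bar x\}$, where $2w(\bar x)$ is the length of the fiber of $K$ over $\bar x$; comparing this with the fibers of $K^o$ one checks that the convenient inclusion $S_i(K^o)\subseteq (S_iK)^o$ \emph{fails} in general, so one cannot simply combine such an inclusion with the primal rearrangement step. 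Instead the estimate must be proved in integrated form: the plan is to disintegrate both $\mu\big((S_iK)^o\big)$ and $\mu(K^o)$ along the $e_i$-direction and to prove the resulting inequality between integrals over $\bar y\in e_i^{\perp}$ by the Meyer--Pajor mechanism. In the unweighted case $\mu=\m$ this is exactly their theorem that Steiner symmetrization does not decrease the volume of the polar, and the task is to carry that argument through with the weight $f_\mu$. The evenness and log-concavity of $f_\mu$ enter essentially, e.g.\ through a reduction to the two-dimensional plane spanned by $e_i$ and one further direction, where the even log-concave weight can be compared across the symmetrization; this coupled use of the convexity of $K^o$ and the log-concavity of $\mu$ is where the real work lies.

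\textbf{Main obstacle.} The entire difficulty is concentrated in the polar estimate $\mu\big((S_iK)^o\big)\ge\mu(K^o)$. The primal bound and the bookkeeping of the coordinate symmetries are routine, but because the favorable inclusion between the symmetral of $K^o$ and the polar of the symmetral of $K$ is false, the polar inequality cannot be localized fiberwise and must be obtained as a genuine integral, Meyer--Pajor-type inequality in which the log-concavity of $\mu$ is used in an essential way.
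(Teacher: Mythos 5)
There is a genuine gap: the step you yourself call ``the crux'' --- the polar estimate $\mu\big((S_iK)^o\big)\ge\mu(K^o)$ --- is never actually proved. You correctly note that the naive fiberwise inclusion along lines parallel to $e_i$ fails, but you then only announce a plan (``disintegrate along the $e_i$-direction\dots two-dimensional reduction\dots where the real work lies'') without carrying it out. As written, the proposal establishes the primal factor and the bookkeeping of the coordinate symmetries, but not the half of the inequality that makes the lemma nontrivial.

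The missing idea is that the polar inequality \emph{can} be localized --- just with respect to the orthogonal family of slices. The proof of Lemma 1 in \cite{meyer1990blaschke} shows that for every $z\in\R u$ the hyperplane sections $L(z):=(L-z)\cap u^\perp$ satisfy
\[
(S_uK)^o(z)\ \supseteq\ \frac{K^o(z)+K^o(-z)}{2}.
\]
Since $K$ is symmetric, $K^o(-z)=-K^o(z)$, so the right-hand side is the central symmetral $\frac{A+(-A)}{2}$ of $A=K^o(z)$. The restriction of $f_\mu(z+\cdot)$ to $u^\perp$ is an even log-concave measure $\mu^z$, so by Pr\'ekopa--Leindler $\mu^z\big(\frac{A+(-A)}{2}\big)\ge\mu^z(A)^{1/2}\mu^z(-A)^{1/2}=\mu^z(A)$, which gives $\mu^z\big((S_uK)^o(z)\big)\ge\mu^z(K^o(z))$ for every $z$; Fubini over $z\in\R u$ then yields $\mu\big((S_uK)^o\big)\ge\mu(K^o)$. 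In other words, the disintegration should be taken over $z\in\R u$ (hyperplane slices perpendicular to $u$), not over $\bar y\in e_i^\perp$ as you propose, and no two-dimensional reduction is needed. A further small point: your justification that $L$ is unconditional should not involve $\mu$ at all --- it is the purely geometric fact that $S_j$ preserves symmetry about $e_k^\perp$ for $k\ne j$ (the fibers over $\bar x$ and over its reflection have equal length), so $n$ successive symmetrizations in the coordinate directions produce an unconditional body.
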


\noindent
Theorem~\ref{thm:main} immediately follows from the lemma
and Corollary~\ref{cor:BSu.c.} because
$$P_\mu(K) \leq P_\mu(L) \leq P_\mu(B).$$

The construction of the unconditional body $L$ in Lemma \ref{lem:tou.c} is via a sequence of Steiner symmetrizations.
The Steiner symmetral $S_u K$ of a convex body $K$ in the direction $u \in \mathbb{S}^{n-1}$
is defined as follows: for every $y \in u^{\perp}$,
\begin{equation}
\label{eqn:Stein}
(S_u K)_y = y + \frac{K_y+(-K_y)}{2},
\end{equation}
where $L_y := (L - y) \cap \R u$ is the (translated) one-dimensional fiber
of $L$ over $y$. 
Steiner symmetrization is a standard tool in proving isoperimetric inequalities,
and it enjoys many useful properties.
A particularly useful property for us is that a sequence of~$n$ Steiner symmetrizations in perpendicular directions
produces an unconditional body (see Lemma 2.3 in~\cite{klartag2003isomorphic}).

\begin{lemma*}
Let $K \subseteq \R^n$ be a convex body.
Let $e_1,\ldots,e_n$ be the standard basis of $\R^n$.
Then,
$$L = S_{e_1} S_{e_2} \ldots S_{e_n} K$$
is convex and unconditional. 
\end{lemma*}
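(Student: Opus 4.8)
The plan is to treat the two conclusions separately. Convexity is immediate from the standard fact that a Steiner symmetral of a convex body is again a convex body (symmetrization preserves convexity and boundedness and does not change the volume, so it keeps the interior nonempty), and this property is stable under composition, so the iterated symmetral $L$ is a convex body. The real content is unconditionality. Writing $R_i \colon x \mapsto x - 2 x_i e_i$ for the reflection that flips the $i$-th coordinate, a body is unconditional exactly when it is invariant under each $R_i$, since $R_1,\ldots,R_n$ generate the sign-change group $\{\pm 1\}^n$ acting coordinatewise. Hence it suffices to prove that $L$ is invariant under every $R_i$.

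I would base the argument on two properties of $S_u$. First, the symmetral $S_u K$ is always invariant under the reflection $R_u$ across $u^{\perp}$: by the defining formula \eqref{eqn:Stein}, the fiber of $S_u K$ over $y \in u^{\perp}$ is the translate by $y$ of $\tfrac12\big(K_y + (-K_y)\big)$, which is an interval in $\R u$ symmetric about the origin, so the fiber is symmetric about $y$ in the $u$-direction. Second, symmetrizing in a direction $u$ preserves reflection symmetry across any perpendicular hyperplane: if $v \perp u$ and $K$ is $R_v$-invariant, then $S_u K$ is $R_v$-invariant as well.

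To obtain the second property I would first establish the equivariance $T(S_u K) = S_u(TK)$ for every orthogonal map $T$ fixing the axis, i.e.\ with $Tu = u$. Such a $T$ carries the line $y + \R u$ to $Ty + \R u$ while preserving the $u$-coordinate, so the one-dimensional fiber of $TK$ over $y$ coincides, as a subset of $\R u$, with the fiber of $K$ over $T^{-1}y$; substituting this into \eqref{eqn:Stein} shows that $T$ intertwines the symmetrization. Specializing to $T = R_v$, which fixes $u$ precisely because $v \perp u$, gives $R_v(S_u K) = S_u(R_v K) = S_u K$ whenever $K$ is $R_v$-invariant, as claimed.

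These two facts feed a short downward induction on the order in which the symmetrizations are applied. Setting $B_n = S_{e_n} K$ and $B_k = S_{e_k} B_{k+1}$, so that $B_1 = L$, I claim that $B_k$ is invariant under $R_k, R_{k+1}, \ldots, R_n$. The base case follows from the first property, which gives $R_n$-invariance of $B_n$. For the inductive step, the first property yields $R_k$-invariance of $B_k = S_{e_k} B_{k+1}$, while for each $j > k$ the vectors $e_k$ and $e_j$ are orthogonal, so the second property promotes the $R_j$-invariance of $B_{k+1}$ to $R_j$-invariance of $B_k$. Taking $k = 1$ shows that $L$ is invariant under all of $R_1, \ldots, R_n$, hence unconditional. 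I expect the only step requiring genuine (though routine) verification to be the fiberwise equivariance behind the second property; everything else is bookkeeping with reflections.
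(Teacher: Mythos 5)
Your argument is correct. Note that the paper does not actually prove this lemma --- it only cites Lemma 2.3 of \cite{klartag2003isomorphic} --- so there is no in-text proof to compare against; your two ingredients (that $S_u$ produces a body symmetric about $u^{\perp}$, and that $S_u$ commutes with orthogonal maps fixing $u$, hence preserves reflection symmetry across any hyperplane containing $u$), combined with the downward induction, are precisely the standard argument behind the cited result. The one step you flag as needing verification does go through: for orthogonal $T$ with $Tu=u$ one has $(TK)_y = T\bigl(K_{T^{-1}y}\bigr) = K_{T^{-1}y}$ as subsets of $\R u$, since $T$ preserves $\R u$ and restricts to the identity there, and substituting this into \eqref{eqn:Stein} yields $T(S_u K) = S_u(TK)$ fiber by fiber.
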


The last piece in the proof of Theorem~\ref{thm:main} is the following proposition. 

\begin{proposition}
\label{clm:1S}
Let $\mu$ be an unconditional log-concave measure on $\R^n$,
and let $K \subset \R^n$ be a symmetric convex body.
If $u$ is one of the vectors in the standard basis then 
$$P_\mu(K) \leq P_\mu(S_u K).$$
\end{proposition}

\begin{proof}
First, we consider the effect of symmetrization on $\mu(K)$.
For all $y \in u^\perp$, the restriction of the density $f_\mu(y + \cdot)$ 
to the line $\R u$ leads to an even log-concave measure $\mu_y$ on the line.
So, using~\eqref{eqn:Stein},
$$\mu_y((S_u K)_y) \geq \mu_y^{\frac{1}{2}}(K_y) \mu_y^{\frac{1}{2}}(-K_y) = \mu_y(K_y).$$
Fubini's theorem implies that
$$\mu(S_u K) = \int_{u^\perp} 
\mu_y((S_u K)_y) dy \geq 
 \int_{u^\perp} 
\mu_y(K_y) dy  = \mu(K).$$

Next, we consider the effect of symmetrization on $\mu(K^\circ)$. 
Meyer and Pajor showed that (proof of Lemma 1 in~\cite{meyer1990blaschke})
for $z \in \R u$,
$$(S_u K)^\circ(z)  \supseteq \frac{K^\circ(z) + K^\circ(-z)}{2},$$
where $L(z) := (L-z) \cap u^\perp$
is an $(n-1)$-dimensional slice at height $z$. 
Because $K$ is symmetric, $K^\circ(-z) = - K^\circ(z)$.
The restriction of the density $f_\mu(z+\cdot)$ 
to the hyperplane $u^\perp$ leads to an even log-concave measure $\mu^z$.
So, 
$$\mu^z((S_u K)^\circ(z)) \geq \mu^z(K^\circ(z))^{\frac{1}{2}} \mu^z(-K^\circ(z))^{\frac{1}{2}} = \mu^z(K^\circ(z)).$$
Fubini's theorem implies that
\begin{equation*}
\mu((S_u K)^\circ) = \int_{\R u} 
\mu^z((S_u K)^\circ(z)) dz \geq  \int_{\R u} 
\mu^z(K^\circ(z)) dz  = \mu(K^\circ). \qedhere
\end{equation*}
\end{proof}

\begin{proof}[Proof of Lemma~\ref{lem:tou.c}]
The body 
$$L = S_{e_1} S_{e_2} \ldots S_{e_n} K$$
is convex and unconditional (in fact, the final symmetrization via $S_{e_1}$ is unnecessary since $K$ is symmetric to begin with). By Proposition~\ref{clm:1S}, 
\begin{equation*}
P_\mu(K) \leq P_\mu(S_{e_n} K)
\leq P_\mu(S_{e_{n-1} } S_{e_n} K) \leq
\ldots \leq P_\mu(L) . \qedhere 
\end{equation*}

\end{proof}

\nocite{*}
\bibliography{BS}

\end{document}